\newtheorem{theorem}{Theorem}
\newtheorem{corollary}[theorem]{Corollary}
\newtheorem{example}[theorem]{Example}
\newenvironment{proof}[1][Proof]{\noindent\textbf{#1.} }{\ \rule{0.5em}{0.5em}}
\begin{document}

\title{Integer powers of certain complex tridiagonal matrices and some
complex factorizations}
\author{Durmu\c{s} Bozkurt\thanks{%
dbozkurt@selcuk.edu.tr} \& \c{S}. Burcu Bozkurt\thanks{%
sbbozkurt@selcuk.edu.tr} \\
Department of Mathematics, Science Faculty of Sel\c{c}uk University}
\maketitle

\begin{abstract}
In this paper, we obtain a general expression for the entries of the $r$th ($%
r\in \mathbb{Z}$) power of a certain $n\times n$ complex tridiagonal matrix.
In addition, we get the complex factorizations of Fibonacci polynomials,
Fibonacci and Pell numbers.
\end{abstract}

\section{Introduction}

\bigskip In order to solve some difference equations, differential and delay
differential equations and boundary value problems, we need to compute the
arbitrary integer powers of a square matrix.

The integer powers of an $n\times n$ matrix $A$ is computed using the
well-known expression $A^{r}=PJ^{r}P^{-1}$ [?], where $J$ \ is the Jordan's
form and $P$ is the eigenvector matrix of $A$, respectively.

Recently, the calculations of integer powers and eigenvalues of tridiagonal
matrices have been well studied. For instance, Rimas [1-4] obtained the
positive integer powers of certain tridiagonal matrices of odd and even
order. \"{O}tele\c{s} and Akbulak [6,7] generalized the results obtained in
[1-4]. Guti\'{e}rrez [8,10] calculated the powers of tridiagonal matrices
with constant diagonal. For detailed information on the powers and the
eigenvalues of tridiagonal matrices, we may refer to the reader [5,9].

\bigskip In [12], Cahill et al. considered the following tridagonal matrix 
\begin{equation*}
H(n)=\left( 
\begin{array}{ccccc}
h_{1,1} & h_{1,2} &  &  & 0 \\ 
h_{2,1} & h_{2,2} & h_{2,3} &  &  \\ 
& h_{3,2} & h_{3,3} & \ddots &  \\ 
&  & \ddots & \ddots & h_{n-1,n} \\ 
0 &  &  & h_{n,n-1} & h_{n,n}%
\end{array}%
\right)
\end{equation*}%
and using the succesive determinants they computed determinant of $H(n)$ as 
\begin{equation*}
\left\vert H(n)\right\vert =h_{n,n}\left\vert H(n-1)\right\vert
-h_{n-1,n}h_{n,n-1}\left\vert H(n-2)\right\vert
\end{equation*}%
with initial conditions $\left\vert H(1)\right\vert =h_{1,1},$ $\left\vert
H(2)\right\vert =h_{1,1}h_{2,2}-h_{1,2}h_{2,1}.$

Let $\{H^{\dagger }(n),n=1,2,\ldots \}$ be the sequence of tridiagonal
matrices as in the form%
\begin{equation*}
H^{\dagger }(n)=\left( 
\begin{array}{ccccc}
h_{1,1} & -h_{1,2} &  &  &  \\ 
-h_{2,1} & h_{2,2} & -h_{2,3} &  &  \\ 
& -h_{3,2} & h_{3,3} & \ddots &  \\ 
&  & \ddots & \ddots & -h_{n-1,n} \\ 
&  &  & -h_{n,n-1} & h_{n,n}%
\end{array}%
\right) .
\end{equation*}%
Then%
\begin{equation}
\det (H(n))=\det (H^{\dagger }(n)).  \tag{1}  \label{1}
\end{equation}%
Let $T$ and $T^{\dagger }$ be $n\times n$ tridiagonal matrices as the
following%
\begin{equation*}
T:=\left( 
\begin{array}{cccccc}
0 & 2 &  &  &  &  \\ 
1 & 0 & 1 &  &  &  \\ 
& 1 & 0 & 1 &  &  \\ 
&  & \ddots & \ddots & \ddots &  \\ 
&  &  & 1 & 0 & 1 \\ 
&  &  &  & 2 & 0%
\end{array}%
\right) \ [1],
\end{equation*}%
\begin{equation*}
T^{\dagger }:=\left( 
\begin{array}{cccccc}
1 & 1 &  &  &  &  \\ 
1 & 0 & 1 &  &  &  \\ 
& 1 & 0 & 1 &  &  \\ 
&  & \ddots & \ddots & \ddots &  \\ 
&  &  & 1 & 0 & 1 \\ 
&  &  &  & 1 & 1%
\end{array}%
\right) [2].
\end{equation*}%
From (1), it is clear that%
\begin{equation*}
\left\vert \lambda I_{n}-T\right\vert =\left\vert 
\begin{array}{cccccc}
\mu & -2 &  &  &  &  \\ 
-1 & \mu & -1 &  &  &  \\ 
& -1 & \mu & -1 &  &  \\ 
&  & \ddots & \ddots & \ddots &  \\ 
&  &  & -1 & \mu & -1 \\ 
&  &  &  & -2 & \mu%
\end{array}%
\right\vert
\end{equation*}%
and%
\begin{equation*}
\left\vert \lambda I_{n}-T^{\dagger }\right\vert =\left\vert 
\begin{array}{cccccc}
\mu ^{\dagger }-1 & -1 &  &  &  &  \\ 
-1 & \mu ^{\dagger } & -1 &  &  &  \\ 
& -1 & \mu ^{\dagger } & -1 &  &  \\ 
&  & \ddots & \ddots & \ddots &  \\ 
&  &  & -1 & \mu ^{\dagger } & -1 \\ 
&  &  &  & -1 & \mu ^{\dagger }-1%
\end{array}%
\right\vert .
\end{equation*}%
By [1, p. 3] and [2, p. 2], the eigenvalues of $T$ and $T^{\dagger }$ are
obtained as%
\begin{equation*}
\mu _{k}=2\cos \left( \frac{(k-1)\pi }{n-1}\right) ,\ k=\overline{1,n}\ [1,\
p.3]
\end{equation*}%
and%
\begin{equation*}
\mu _{k}^{\dagger }=-2\cos \left( \frac{k\pi }{n}\right) ,\ k=\overline{1,n}%
\ [2,\ p.2]
\end{equation*}%
respectively.

Let

\begin{equation}
A:=\left( 
\begin{array}{cccccc}
a & 2b &  &  &  & 0 \\ 
b & a & -b &  &  &  \\ 
& -b & a & -b &  &  \\ 
&  & \ddots & \ddots & \ddots &  \\ 
&  &  & -b & a & b \\ 
0 &  &  &  & 2b & a%
\end{array}%
\right)  \tag{2}  \label{2}
\end{equation}%
and%
\begin{equation}
A^{^{\dagger }}:=\left( 
\begin{array}{cccccc}
a+b & b &  &  &  & 0 \\ 
b & a & -b &  &  &  \\ 
& -b & a & -b &  &  \\ 
&  & \ddots & \ddots & \ddots &  \\ 
&  &  & -b & a & b \\ 
0 &  &  &  & b & a+b%
\end{array}%
\right)  \tag{3}  \label{3}
\end{equation}%
be the tridiagonal matrices with $a$ and $b\neq 0$ are complex numbers. In
this paper, we obtain the eigenvalues and eigenvectors of an $n\times n$
complex tridiagonal matrices in (2) and (3) and calculate the integer powers
of the matrix in (2) for $n$ is odd order.

\section{Eigenvalues and eigenvectors of $A$ and $A^{\dagger }$}

\begin{theorem}
\bigskip Let $A$ be an $n\times n$ tridiagonal matrix given by (2). Then the
eigenvalues and eigenvectors of A are 
\begin{equation}
\lambda _{k}=a+2b\cos \left( \frac{(k-1)\pi }{n-1}\right) ,\ k=\overline{1,n}
\tag{4}  \label{4}
\end{equation}%
and%
\begin{equation*}
x_{jk}=\left\{ 
\begin{array}{l}
T_{j-1}(m_{k}),\ \ \ \ \ \ \ \ \ \ j=1,2,n-1,n \\ 
(-1)^{j}T_{j-1}(m_{k}),\ j=\overline{3,n-2}%
\end{array}%
\right. ;k=\overline{1,n}
\end{equation*}%
where $m_{k}=\frac{\lambda _{k}-a}{2b},T_{s}(.)$ is the $s-$th degree
Chebyshev polynomial of the first kind [11, p. 14].
\end{theorem}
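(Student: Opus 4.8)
The plan is to obtain (4) by relating the characteristic polynomial of $A$ to that of the matrix $T$ of [1] quoted above, and then to obtain the eigenvectors by direct substitution. Write $A=aI_{n}+bB$, where $B$ is the matrix obtained from $T$ by reversing the signs of the interior off-diagonal entries, so that $B$ has zero main diagonal, superdiagonal $2,-1,\dots ,-1,1$ and subdiagonal $1,-1,\dots ,-1,2$. Now $\lambda I_{n}-A$ is tridiagonal with main diagonal $\lambda -a$, and by the three-term recursion for the leading principal minors recalled in the Introduction (cf. (1)), $\det (\lambda I_{n}-A)$ depends only on its diagonal entries and on the products of each superdiagonal entry with the opposite subdiagonal entry. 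For $\lambda I_{n}-A$ these products equal $2b^{2}$ at the first and last positions and $b^{2}$ in between, and exactly the same is true for $(\lambda -a)I_{n}-bT$; hence $\det (\lambda I_{n}-A)=\det\big((\lambda -a)I_{n}-bT\big)$. Since the eigenvalues of $T$ are $\mu _{k}=2\cos\frac{(k-1)\pi }{n-1}$ ($k=\overline{1,n}$) by [1, p.3], this forces $\lambda _{k}=a+b\mu _{k}=a+2b\cos\frac{(k-1)\pi }{n-1}$, which is (4); and since $\cos$ is injective on $[0,\pi ]$ these $n$ numbers are distinct, so $A$ is diagonalizable with one-dimensional eigenspaces.

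For the eigenvectors I would substitute the proposed vector $x^{(k)}=(x_{1k},\dots ,x_{nk})^{\top }$ into $Ax^{(k)}=\lambda _{k}x^{(k)}$ and check the $n$ scalar equations. Put $m_{k}=\cos\theta _{k}$, $\theta _{k}=\frac{(k-1)\pi }{n-1}$, so that $\lambda _{k}-a=2bm_{k}$ and $T_{s}(m_{k})=\cos(s\theta _{k})$. Dividing the $j$-th row by $b$: row $1$ becomes $2x_{2k}=2m_{k}x_{1k}$, true since $x_{1k}=T_{0}(m_{k})=1$ and $x_{2k}=T_{1}(m_{k})=m_{k}$; row $2$ becomes $x_{1k}-x_{3k}=2m_{k}x_{2k}$, i.e. $1+T_{2}(m_{k})=2m_{k}^{2}$; and each interior row $3\le j\le n-2$ becomes, after the common factor $(-1)^{j}$ is cancelled from the three consecutive components, the Chebyshev recurrence $T_{j}(m_{k})=2m_{k}T_{j-1}(m_{k})-T_{j-2}(m_{k})$. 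Thus, away from the two ends, the verification is nothing but the three-term recurrence for $T_{s}$ together with $T_{0}=1$ and $T_{1}(m)=m$.

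The delicate part — the step I expect to be the main obstacle — is the bottom of the matrix, rows $n-1$ and $n$, where the off-diagonal entries revert to $b$ and $2b$ while the sign convention of $x^{(k)}$ switches back from $(-1)^{j}T_{j-1}$ to $T_{j-1}$. Row $n$ reduces to $x_{(n-1)k}=m_{k}x_{nk}$, i.e. $\cos((n-2)\theta _{k})=\cos\theta _{k}\cos((n-1)\theta _{k})$; the product-to-sum identity rewrites this as $\cos((n-2)\theta _{k})=\cos(n\theta _{k})$, equivalently $\sin((n-1)\theta _{k})\sin\theta _{k}=0$, which holds because $(n-1)\theta _{k}=(k-1)\pi$. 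Row $n-1$ is treated the same way via the Chebyshev recurrence. Making the signs of $x_{(n-1)k}$ and $x_{nk}$ consistent with the tail of the alternating block is where a parity condition enters; one either takes $n$ odd, which is the case used for the integer-power computation in the next section, or inserts the factor $(-1)^{n-1}$ into these two components. Equivalently and more conceptually, one checks that the diagonal sign matrix $D=\mathrm{diag}(1,1,-1,1,\dots )$ satisfies $D^{-1}AD=aI_{n}+bT$ and transports the eigenpairs of $T$ through $D$, the same parity remark reappearing in the last two entries of $D$.
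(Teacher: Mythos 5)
Your proposal is correct, but it reaches the theorem by a somewhat different route than the paper. The paper passes to the rescaled matrix $B$ of (5) with $c=a/b$, computes its characteristic polynomial $p_{n}(t)=(t^{2}-4)\Delta _{n-2}(t)=(t^{2}-4)U_{n-2}(\tfrac{t}{2})$, quotes [6] for the roots $t_{k}=2\cos \frac{(k-1)\pi }{n-1}$, and then only asserts that solving the system (9) with $x_{1}=1$ produces (10), giving no verification of the eigenvector formula. You avoid the characteristic polynomial altogether: the sign-invariance of tridiagonal determinants (the paper's identity (1)) gives $\det (\lambda I_{n}-A)=\det ((\lambda -a)I_{n}-bT)$, so the spectrum of $A$ is $a+b\mu _{k}$ with $\mu _{k}$ the quoted eigenvalues of $T$ from [1], and you then check the eigenvector formula row by row, reducing every equation to $T_{0}=1$, $T_{1}(m)=m$ and the three-term recurrence, with only the last row invoking the eigenvalue condition via $\sin ((n-1)\theta _{k})=0$. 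Your reformulation $D^{-1}AD=aI_{n}+bT$ with a diagonal sign matrix $D$ is the cleanest packaging of this and delivers eigenvalues and eigenvectors of $A$ simultaneously from those of $T$; what the paper's route buys instead is the explicit factorization of the characteristic polynomial, which has some independent interest. Finally, your parity remark is not a flaw in your argument but a genuine point the paper glosses over: with the components $j=n-1,n$ left unsigned as in the statement, rows $n-2$ and $n-1$ of $(\lambda _{k}I_{n}-A)x=0$ reduce to $(1+(-1)^{n-2})T_{n-2}(m_{k})=0$ and $(1+(-1)^{n-2})T_{n-3}(m_{k})=0$, which hold identically when $n$ is odd but cannot both hold when $n$ is even (consecutive Chebyshev polynomials have no common root); so the displayed eigenvectors are valid exactly in the odd case the paper assumes from Section 3 onward, and for even $n$ the last two components must carry the factor $(-1)^{n-1}$, as you indicate (equivalently, the last two diagonal entries of $D$ are $(-1)^{n-1}$).
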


\begin{proof}
\bigskip Let $B$ be the following $n\times n$ tridiagonal matrix%
\begin{equation}
B:=\left( 
\begin{array}{cccccc}
c & 2 &  &  &  &  \\ 
1 & c & -1 &  &  &  \\ 
& -1 & c & -1 &  &  \\ 
&  & \ddots & \ddots & \ddots &  \\ 
&  &  & -1 & c & 1 \\ 
&  &  &  & 2 & c%
\end{array}%
\right)  \tag{5}  \label{5}
\end{equation}%
where $c=\frac{a}{b}.$ Then the characteristic polynomials of $B$ are%
\begin{equation}
p_{n}(t)=(t^{2}-4)\Delta _{n-2}(t)  \tag{6}  \label{6}
\end{equation}%
where $t=\lambda -c$ and 
\begin{equation}
\Delta _{n}(t)=t\Delta _{n-1}(t)-\Delta _{n-2}(t)  \tag{7}  \label{7}
\end{equation}%
with initial conditions $\Delta _{0}(t)=1,\Delta _{1}(t)=t$ and $\Delta
_{2}(t)=t^{2}-1.$ Note that the solution of the difference equation in (7)
is $\Delta _{n}(t)=U_{n}(\frac{t}{2}),$ where $U_{n}$ is the $n$th degree
Chebyshev polynomial of the second kind [11, p.15]. i.e.%
\begin{equation*}
U_{n}(x)=\frac{\sin ((n+1)\arccos x)}{\sin (\arccos x)},\ \ -1\leq x\leq 1.
\end{equation*}%
All the roots of the polynomial $U_{n}(x)$ are included in the interval $%
[-1,1]$ and can be found using the relation%
\begin{equation*}
x_{nk}=\cos \left( \frac{k\pi }{n+1}\right) ,\ \ k=\overline{1,n}.
\end{equation*}%
Therefore the characteristic polynomial in (6) can be written as%
\begin{equation*}
p_{n}(t)=(t^{2}-4)U_{n-2}(\tfrac{t}{2}).
\end{equation*}%
From [6, p. 2], the eigenvalues of the matrix $B$ are%
\begin{equation*}
t_{k}=2\cos \left( \frac{(k-1)\pi }{n-1}\right) ,\ k=\overline{1,n}.
\end{equation*}%
Then we get the eigenvalues of the matrix $A$ as%
\begin{equation*}
\lambda _{k}=a+2b\cos \left( \frac{(k-1)\pi }{n-1}\right) .
\end{equation*}%
Now we compute eigenvectors of the matrix $A$. All eigenvectors of the
matrix $A$ are the solutions of the following homogeneous linear equations
system%
\begin{equation}
(\lambda _{k}I_{n}-A)x=0  \tag{8}  \label{8}
\end{equation}%
where $\lambda _{k}$ is the $k$th eigenvalue of the matrix $A$ ($k=\overline{%
1,n}$). The equations system (8) is clearly written as%
\begin{equation}
\left. 
\begin{array}{r}
(\lambda _{k}-a)x_{1}-2bx_{2}=0 \\ 
-bx_{1}+(\lambda _{k}-a)x_{2}+bx_{3}=0 \\ 
bx_{2}+(\lambda _{k}-a)x_{3}+bx_{4}=0 \\ 
\vdots \ \ \ \  \\ 
bx_{n-2}+(\lambda _{k}-a)x_{n-1}-bx_{n}=0 \\ 
-2bx_{n-1}+(\lambda _{k}-a)x_{n}=0%
\end{array}%
\right\}  \tag{9}  \label{9}
\end{equation}%
Dividing all terms of the each equation in system (9) by $b\neq 0$,
substituting $m_{k}=\frac{\lambda _{k}-a}{2b},$ choosing $x_{1}=1$ then
solving the set of the system (9) we find the $j$th component of $k$th
eigenvector of the matrix $A$ as 
\begin{equation}
x_{jk}=\left\{ 
\begin{array}{l}
T_{j-1}(m_{k}),\ \ \ \ \ \ \ \ j=1,2,n-1,n \\ 
(-1)^{j}T_{j-1}(m_{k}),\ j=\overline{3,n-2}%
\end{array}%
\right. ;\ j,k=\overline{1,n}  \tag{10}  \label{10}
\end{equation}%
where $m_{k}=\frac{\lambda _{k}-a}{2b}$ and $T_{s}(.)$ is the $s-$th degree
Chebyshev polynomial of the first kind.
\end{proof}

\begin{theorem}
\bigskip Let $A^{\dagger }$ be an $n\times n$ tridiagonal matrix given by
(3). Then the eigenvalues and eigenvectors of $A^{\dagger }$ are 
\begin{equation*}
\lambda _{k}^{\dagger }=a-2b\cos \left( \frac{k\pi }{n}\right) ,\ k=%
\overline{1,n}
\end{equation*}%
and%
\begin{equation*}
y_{jk}^{\dagger }=\left\{ 
\begin{array}{l}
T_{\frac{2j-1}{2}}(m_{k}^{\dagger }),\ \ \ \ \ \ \ \ j=1,2,n-1,n \\ 
(-1)^{j}T_{\frac{2j-1}{2}}(m_{k}^{\dagger }),\ j=\overline{3,n-2}%
\end{array}%
\right. ;k=\overline{1,n}
\end{equation*}%
where $m_{k}^{\dagger }=\frac{\lambda _{k}^{\dagger }-a}{2b}$ and $T_{s}(.)$
is the $s-$th degree Chebyshev polynomial of the first kind [11, p. 14].
\end{theorem}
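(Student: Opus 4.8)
The plan is to follow the strategy of the previous theorem: first locate the eigenvalues by reducing the characteristic determinant to one that is already known, and then read off the eigenvectors from the null system $(\lambda _k^{\dagger }I_n-A^{\dagger })y=0$.

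For the eigenvalues I would normalise first. Since $A^{\dagger }=bB^{\dagger }$ with $c=a/b$, where
\begin{equation*}
B^{\dagger }:=\left(
\begin{array}{cccccc}
c+1 & 1 &  &  &  & 0 \\
1 & c & -1 &  &  &  \\
& -1 & c & -1 &  &  \\
&  & \ddots & \ddots & \ddots &  \\
&  &  & -1 & c & 1 \\
0 &  &  &  & 1 & c+1
\end{array}
\right) ,
\end{equation*}
it suffices to find the spectrum of $B^{\dagger }$ and multiply by $b$. Put $t=\lambda -c$, so that $\lambda I_n-B^{\dagger }$ is tridiagonal with diagonal $(t-1,t,\ldots ,t,t-1)$ and with every product of symmetric off-diagonal pairs equal to $1$. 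By the Cahill et al.\ determinant recursion recalled in the introduction, the determinant of a tridiagonal matrix depends on its off-diagonal entries only through such products; hence (cf.\ (1)) one may replace all off-diagonal entries of $\lambda I_n-B^{\dagger }$ by $-1$ without changing its determinant, and the resulting matrix is exactly $(\lambda -c)I_n-T^{\dagger }$. The eigenvalues of $T^{\dagger }$ being $-2\cos (k\pi /n)$ (see [2, p.~2]), those of $B^{\dagger }$ are $c-2\cos (k\pi /n)$, and therefore $\lambda _k^{\dagger }=a-2b\cos (k\pi /n)$, $k=\overline{1,n}$. (Alternatively, one may compute the characteristic polynomial of $B^{\dagger }$ directly, in the form $\Delta _n(t)-2\Delta _{n-1}(t)+\Delta _{n-2}(t)$ with $\Delta _m=U_m(t/2)$, and factor it using $\sin ((n+1)\theta )+\sin ((n-1)\theta )=2\sin (n\theta )\cos \theta $ after setting $t=2\cos \theta $; this produces the same spectrum.)

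For the eigenvectors I would write $(\lambda _k^{\dagger }I_n-A^{\dagger })y=0$ out componentwise, divide each equation by $b\neq 0$, and substitute $m_k^{\dagger }=(\lambda _k^{\dagger }-a)/(2b)$. The first and last equations then read $y_2=(2m_k^{\dagger }-1)y_1$ and $y_{n-1}=(2m_k^{\dagger }-1)y_n$; the second and the penultimate read $y_3=y_1-2m_k^{\dagger }y_2$ and $y_n=y_{n-2}+2m_k^{\dagger }y_{n-1}$; and for $j=\overline{3,n-2}$ the interior equations give $y_{j+1}=-y_{j-1}-2m_k^{\dagger }y_j$. The sign in front of $y_{j-1}$ here is precisely what is absorbed by the alternating factor $(-1)^j$ on the block $j=\overline{3,n-2}$: writing $y_{jk}^{\dagger }=(-1)^j z_j$ there and $y_{jk}^{\dagger }=z_j$ for $j=1,2,n-1,n$, all these relations collapse to the single recursion $z_{j+1}=2m_k^{\dagger }z_j-z_{j-1}$, i.e.\ $T_{s+1}(x)=2xT_s(x)-T_{s-1}(x)$ read with the convention $T_s(\cos \theta )=\cos (s\theta )$ extended to half-integer $s$. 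Fixing the normalisation $z_1=T_{1/2}(m_k^{\dagger })$, the first equation together with the identity $\cos (3\theta /2)=(2\cos \theta -1)\cos (\theta /2)$ forces $z_2=T_{3/2}(m_k^{\dagger })$, and induction along the recursion then yields $z_j=T_{(2j-1)/2}(m_k^{\dagger })$ for every $j$, which is the asserted formula.

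The substance of the argument is not the interior recursion but the bookkeeping at the four boundary rows, and this is where I expect the only genuine difficulty. One must check that the recursion ``meshes'' at the two junctions $j=2,3$ and $j=n-2,n-1$, where the $(-1)^j$ decoration is switched on and off; these checks reduce to the product-to-sum cosine identities already used, and they are the point at which the parity of $n$ enters and has to be tracked carefully. One must also dispatch the first and last equations themselves: the first is the normalisation identity just invoked, while the last, $y_{n-1}=(2m_k^{\dagger }-1)y_n$, reduces (after expansion and use of $m_k^{\dagger }=\cos \theta _k$) to $\sin (n\theta _k)=0$, which holds precisely because $\theta _k$ is an integer multiple of $\pi /n$ — that is, precisely because $\lambda _k^{\dagger }$ is an eigenvalue of $A^{\dagger }$. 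Finally, since $A^{\dagger }$ is unchanged under reversal of the index order, the $j=n-1,n$ computations may be transported from the $j=1,2$ ones, which halves the work.
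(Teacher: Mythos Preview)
Your approach matches the paper's: normalise by $b$ to a matrix whose spectrum is already known, then read the eigenvectors off the homogeneous system $(\lambda_k^{\dagger}I_n-A^{\dagger})y=0$. The paper is considerably terser on both counts --- for the eigenvalues it defines the same normalised matrix (called $S$ there, your $B^{\dagger}$) but cites [7, Lemma~2] directly for its spectrum rather than reducing further to $T^{\dagger}$ via identity~(1) as you do; and for the eigenvectors it simply states the formula as the solution of the system, without the recursion-and-boundary bookkeeping you carry out. Your version is thus more self-contained, but the strategy is the same.
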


\begin{proof}
Let%
\begin{equation*}
S:=\left( 
\begin{array}{cccccc}
\frac{a}{b}+1 & 1 &  &  &  & 0 \\ 
1 & \frac{a}{b} & -1 &  &  &  \\ 
& -1 & \ddots & \ddots &  &  \\ 
&  & \ddots & \frac{a}{b} & -1 &  \\ 
&  &  & -1 & \frac{a}{b} & 1 \\ 
0 &  &  &  & 1 & \frac{a}{b}+1%
\end{array}%
\right) .
\end{equation*}%
From [7, Lemma 2, p. 65], we have the eigenvalues of the matrix $S$ as%
\begin{equation*}
\delta _{k}=\frac{a}{b}-2\cos \left( \frac{k\pi }{n}\right) ,\text{ for }k=%
\overline{1,n}
\end{equation*}%
Since the eigenvalues of $A^{\dagger }$ are $\lambda _{k}^{\dagger }=b\delta
_{k},$ the proof \ is completed.

The eigenvectors of $A^{\dagger }$ is the solution of the following linear
homogeneous equations system:%
\begin{equation}
(\lambda _{j}^{\dagger }I_{n}-A^{\dagger })y_{jk}=0  \tag{11}  \label{11}
\end{equation}%
where $\lambda _{j}^{\dagger }$ and $y_{jk}$\ are the $j$th eigenvalues and $%
k$th eigenvectors of the matrix $A^{\dagger }$ for $1\leq j,k\leq n.$Then
the solution of the equations system in (11) is%
\begin{equation*}
y_{jk}=\left\{ 
\begin{array}{l}
T_{\frac{2j-1}{2}}(m_{k}^{\dagger }),\ \ \ \ \ \ \ \ \ \ j=1,2,n-1,n \\ 
(-1)^{j}T_{\frac{2j-1}{2}}(m_{k}^{\dagger }),\ j=\overline{3,n-2}%
\end{array}%
\right. ;k=\overline{1,n}
\end{equation*}%
here $m_{k}^{\dagger }=\frac{\lambda _{k}^{\dagger }-a}{2b}$ and $T_{s}(.)$
is the $s-$th degree Chebyshev polynomial of the first kind.
\end{proof}

\section{\protect\bigskip The integer powers of the matrix A}

In this section we assume that $n$ is positive odd integer.

Since all the eigenvalues $\ \lambda _{k}\ (k=\overline{1,n})$ are simple,
each eigenvalue$\ \lambda _{k}$ corresponds single Jordan cells $J_{1}(\
\lambda _{k})$ in the matrix $J.$ Then, we write down the Jordan$^{\prime }$%
s forms of the matrix $A$

\begin{equation*}
J=diag(\lambda _{1},\lambda _{2},\lambda _{3},...,\lambda _{n}).
\end{equation*}

Using the equality $J=P^{-1}AP,$ we need the matrices $P$ and $P^{-1}$ and
derive the expressions for the $r$th power $(r\in \mathbb{Z})$ of the matrix 
$A.$Then%
\begin{equation*}
A^{r}=PJ^{r}P^{-1}.
\end{equation*}

From (10), we can write the eigenvectors matrix $P$\ as%
\begin{equation*}
P=[x_{jk}]=\left\{ 
\begin{array}{l}
T_{j-1}(m_{k}),\ \ \ \ \ \ \ \ \ j=1,2,n-1,n \\ 
(-1)^{j}T_{j-1}(m_{k}),\ j=\overline{3,n-2}%
\end{array}%
\right. k=\overline{1,n}
\end{equation*}%
where $T_{s}(.)$ is the $s-$th degree Chebyshev polynomial of the first kind.

First of all, let us obtain the inverse matrix $P^{-1}.$

Denoting $j$th column of the matrix $P^{-1}$ by $p_{j}$ then we have 
\begin{equation*}
p_{j}=\left( 
\begin{array}{c}
2T_{j-1}(m_{1}) \\ 
T_{j-1}(m_{2}) \\ 
T_{j-1}(m_{3}) \\ 
2T_{j-1}(m_{4}) \\ 
\vdots \\ 
2T_{j-1}(m_{n})%
\end{array}%
\right) ,\ j=1,n
\end{equation*}%
and%
\begin{equation*}
p_{j}=\left( 
\begin{array}{c}
(-1)^{j}4T_{j-1}(m_{1}) \\ 
(-1)^{j}2T_{j-1}(m_{2}) \\ 
(-1)^{j}2T_{j-1}(m_{3}) \\ 
(-1)^{j}4T_{j-1}(m_{4}) \\ 
\vdots \\ 
(-1)^{j}4T_{j-1}(m_{n})%
\end{array}%
\right) ,\ j=\overline{2,n-1}.
\end{equation*}

Hence we obtain%
\begin{equation*}
P^{-1}=\frac{1}{2n-2}(p_{1},p_{2},\ldots ,p_{n}).
\end{equation*}%
Let%
\begin{equation*}
A^{r}=PJ^{r}P^{-1}=U(r)=(u_{ij}(r)).
\end{equation*}%
Then%
\begin{equation*}
PJ^{r}=\left\{ 
\begin{array}{l}
\lambda _{k}^{r}T_{j-1}(m_{k}),\ \ \ \ \ \ \ \ \ j=1,2,n-1,n \\ 
(-1)^{j}\lambda _{k}^{r}T_{j-1}(m_{k}),\ j=\overline{3,n-2}%
\end{array}%
\right. k=\overline{1,n}.
\end{equation*}%
Hence

\begin{eqnarray*}
u_{ij}(r) &=&\frac{1}{2n-2}\left( \lambda
_{2}^{r}T_{i-1}(m_{2})T_{j-1}(m_{2})+\lambda
_{3}^{r}T_{i-1}(m_{3})T_{j-1}(m_{3})\right. \\
&&+2\dsum\limits_{\underset{k\neq 2,3}{k=1}}^{n}\lambda
_{k}^{r}T_{i-1}(m_{k})T_{j-1}(m_{k}))
\end{eqnarray*}%
where $i=\overline{1,n};\ j=1,n$ and%
\begin{eqnarray*}
u_{ij}(r) &=&\frac{1}{n-1}\left( 
\begin{array}{c}
\\ 
(-1)^{j}(\lambda _{2}^{r}T_{i-1}(m_{2})T_{j-1}(m_{2})+\lambda
_{3}^{r}T_{i-1}(m_{3})T_{j-1}(m_{3})) \\ 
\end{array}%
\right. \\
&&\left. +(-1)^{j}2\dsum\limits_{\underset{k\neq 2,3}{k=1}}^{n}\lambda
_{k}^{r}T_{i-1}(m_{k})T_{j-1}(m_{k})\right)
\end{eqnarray*}%
here $i=\overline{1,n};\ j=\overline{2,n-1}.$

\section{\textbf{Numerical \ examples}}

We can find the arbitrary integer powers of the $n$th order of the matrix,
where $n$ is positive odd integer.

\begin{example}
Let $n=3,r=3,a=1$ and $b=3.$\ Then we have%
\begin{equation*}
J=diag(\lambda _{1},\lambda _{2},\lambda
_{3})=diag(a,a+2b,a-2b)=diag(1,7,-5).
\end{equation*}%
Therefore%
\begin{equation*}
A^{3}=(q_{ij}(r))=(q_{ij}(3))=\frac{1}{4}\left( 
\begin{array}{ccc}
55 & 234 & 54 \\ 
117 & 109 & 117 \\ 
54 & 234 & 55%
\end{array}%
\right) .
\end{equation*}
\end{example}

\begin{example}
If $n=5,r=4,a=1$ and $b=3,$ then 
\begin{eqnarray*}
J &=&diag(\lambda _{1},\lambda _{2},\lambda _{3},\lambda _{4},\lambda _{5})
\\
&=&diag(1,7,-5,1+3\sqrt{2},1-3\sqrt{2}).
\end{eqnarray*}%
Therefore%
\begin{equation*}
A^{4}=q_{ij}(4)=\frac{1}{8}\left( 
\begin{array}{rrrrr}
595 & 672 & -756 & 216 & 162 \\ 
336 & 973 & -444 & 540 & 108 \\ 
-378 & -444 & 757 & -444 & -378 \\ 
108 & 540 & -444 & 973 & 336 \\ 
162 & 216 & -756 & 672 & 595%
\end{array}%
\right) .
\end{equation*}
\end{example}

\section{Complex Factorizations}

The well-known $F(x)=\{F_{n}(x)\}_{n=1}^{\infty }$ Fibonacci polynomials are
defined by $F_{n}(x)=xF_{n-1}(x)+F_{n-2}(x)$ with initial conditions $%
F_{0}(x)=0$ and $F_{1}(x)=1.$ For example if $x=1$ and $x=2,$\ then we obtain%
\begin{equation*}
F_{n}(1)=\{0,1,1,2,3,5,8,\ldots \}
\end{equation*}%
Fibonacci numbers and%
\begin{equation*}
F_{n}(2)=\{0,1,2,5,12,29,\ldots \}
\end{equation*}%
Pell numbers, respectively.

\begin{theorem}
Let the matrix $A$ be as in (2) with $a:=x$ and $b:=\mathbf{i}$ where $%
\mathbf{i}=\sqrt{-1}.$ Then%
\begin{equation*}
\det (A)=(x^{2}+4)F_{n-1}(x).
\end{equation*}
\end{theorem}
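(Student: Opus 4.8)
The plan is to use the eigenvalue formula from Theorem 1 (the first theorem of the excerpt), which tells us that $A=PJP^{-1}$ with $J=\operatorname{diag}(\lambda_1,\dots,\lambda_n)$ and $\lambda_k=a+2b\cos\!\big(\tfrac{(k-1)\pi}{n-1}\big)$. Since the determinant is similarity-invariant, $\det(A)=\prod_{k=1}^n\lambda_k$. Substituting $a=x$ and $b=\mathbf{i}$ gives
\begin{equation*}
\det(A)=\prod_{k=1}^{n}\Big(x+2\mathbf{i}\cos\tfrac{(k-1)\pi}{n-1}\Big).
\end{equation*}
The first factor ($k=1$) is $x+2\mathbf{i}$; wait — I should be careful: $\cos 0 = 1$, so the $k=1$ term is $x+2\mathbf{i}$, and the $k=n$ term has $\cos\pi=-1$, giving $x-2\mathbf{i}$. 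Their product is $x^2+4$. So I would peel off these two extreme factors first, writing $\det(A)=(x^2+4)\prod_{k=2}^{n-1}\big(x+2\mathbf{i}\cos\tfrac{(k-1)\pi}{n-1}\big)$, and then identify the remaining product of $n-2$ terms with $F_{n-1}(x)$.

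The key identity I need is the classical product formula for Fibonacci polynomials in terms of cosines, namely
\begin{equation*}
F_{m}(x)=\prod_{j=1}^{m-1}\Big(x-2\mathbf{i}\cos\tfrac{j\pi}{m}\Big),
\end{equation*}
which follows from the standard fact that $F_m(x)=\mathbf{i}^{m-1}U_{m-1}(x/(2\mathbf{i}))$ together with the factorization of the Chebyshev polynomial $U_{m-1}$ over its roots $\cos\tfrac{j\pi}{m}$. Applying this with $m=n-1$ gives $F_{n-1}(x)=\prod_{j=1}^{n-2}\big(x-2\mathbf{i}\cos\tfrac{j\pi}{n-1}\big)$. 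I then match this against the remaining product $\prod_{k=2}^{n-1}\big(x+2\mathbf{i}\cos\tfrac{(k-1)\pi}{n-1}\big)$ by the reindexing $j=k-1$ (so $j$ runs from $1$ to $n-2$) and the observation that $\cos\tfrac{(n-1-j)\pi}{n-1}=-\cos\tfrac{j\pi}{n-1}$; pairing the $j$ term of one product with the $(n-1-j)$ term of the other shows the two products agree (the sign flips cancel in pairs, and if $n-1$ is even there is a middle term $j=(n-1)/2$ where $\cos=0$ and the factor is simply $x$ in both). Hence the remaining product equals $F_{n-1}(x)$ and $\det(A)=(x^2+4)F_{n-1}(x)$.

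Alternatively — and this is probably the cleaner write-up to avoid the cosine bookkeeping — I would use the determinant recursion from the Cahill et al. result quoted in the introduction. With $a=x$, $b=\mathbf{i}$ the interior of $A$ is exactly the tridiagonal matrix whose determinant is governed by $D_n=x D_{n-1}+D_{n-2}$ (since the off-diagonal products are $(-\mathbf{i})(-\mathbf{i})=-1$, contributing $+$ in the recursion $\det = x\det_{n-1}-(-1)\det_{n-2}$), i.e. the interior determinants are Fibonacci polynomials; then I expand $\det(A)$ along the first and last rows to account for the anomalous corner entries $2b=2\mathbf{i}$, which produces exactly the factor $x^2+4$ in front. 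The main obstacle in either route is handling the boundary rows correctly: the $2b$ entries in positions $(1,2)$ and $(n,n-1)$ break the pure tridiagonal pattern, so I must do the cofactor expansion carefully (or, in the eigenvalue route, correctly isolate the two extreme eigenvalues and verify the sign-pairing in the reduced product). Everything else is routine substitution.
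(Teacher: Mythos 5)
Your primary argument is correct, but it runs in the opposite logical direction from the paper's. The paper proves this theorem by a Laplace expansion along the first two and last two rows of $A$, obtaining $\det(A)=x^{2}D_{n-2}+4xD_{n-3}+4D_{n-4}$ with $D_{m}=\det(\mathrm{tridiag}_{m}(-\mathbf{i},x,-\mathbf{i}))=F_{m+1}(x)$, and then factoring out $x^{2}+4$ using only the Fibonacci recursion $F_{n-1}=xF_{n-2}+F_{n-3}$; the complex product factorization of $F_{n-1}(x)$ is deduced only afterwards (Corollary 6) by combining this theorem with the eigenvalue formula (4). You instead start from the spectral product $\det(A)=\prod_{k=1}^{n}\lambda_{k}$ of Theorem 1, peel off the $k=1$ and $k=n$ factors $x\pm 2\mathbf{i}$ to get $x^{2}+4$, and identify the remaining product with $F_{n-1}(x)$ via the classical identity $F_{m}(x)=\mathbf{i}^{m-1}U_{m-1}\bigl(x/(2\mathbf{i})\bigr)$, the root factorization of $U_{m-1}$, and a correct cosine-pairing argument using $\cos\frac{(n-1-j)\pi}{n-1}=-\cos\frac{j\pi}{n-1}$. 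This is valid, and it is shorter once Theorem 1 and the Chebyshev--Fibonacci identity are granted; but note that it imports as a known lemma essentially the complex factorization of Fibonacci polynomials that the paper treats as its payoff, so on your route Corollary 6 becomes a restatement of your lemma rather than a consequence of the theorem, whereas the paper's determinant expansion is elementary and self-contained and keeps the corollary as genuinely new output. Your alternative sketch (interior recursion $D_{n}=xD_{n-1}+D_{n-2}$ plus expansion at the boundary to produce $x^{2}+4$) is essentially the paper's proof, but you leave exactly the nontrivial bookkeeping unexecuted: the paper expands along the first two and last two rows (not just the first and last), which is what cleanly isolates the three terms $x^{2}D_{n-2}$, $4xD_{n-3}$, $4D_{n-4}$.
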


\begin{proof}
Applying Laplace expansion according to the first two and the last two rows
of the matrix $A$, we have%
\begin{equation*}
\det (A)=x^{2}D_{n-2}+4xD_{n-3}+4D_{n-4}
\end{equation*}%
here $D_{n}=\det (tridiag_{n}(-\mathbf{i},x,-\mathbf{i})).$ Since%
\begin{equation*}
\det (tridiag_{n}(-\mathbf{i},x,-\mathbf{i}))=F_{n+1}(x),
\end{equation*}%
we obtain%
\begin{eqnarray*}
\det (A) &=&x^{2}F_{n-1}(x)+4xF_{n-2}(x)+4F_{n-3}(x) \\
&=&x^{2}(xF_{n-2}(x)+F_{n-3}(x))+4xF_{n-2}(x)+4F_{n-3}(x) \\
&=&(x^{2}+4)(xF_{n-2}(x)+F_{n-3}(x))=(x^{2}+4)F_{n-1}(x).
\end{eqnarray*}%
Thus, the proof is completed.
\end{proof}

\begin{corollary}
Let the matrix $A$ be as in (2) with $a:=x$ and $b:=\mathbf{i.}$ Then the
complex factorization of generalized Fibonacci-Pell numbers is the following
form:%
\begin{equation*}
F_{n-1}(x)=\frac{1}{x^{2}+4}\dprod\limits_{k=1}^{n}\left( x+2\mathbf{i}\cos
\left( \frac{(k-1)\pi }{n-1}\right) \right)
\end{equation*}
\end{corollary}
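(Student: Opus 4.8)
The plan is to play off two different evaluations of $\det(A)$ against each other. Theorem~5 (the statement just proved above) gives the closed form $\det(A)=(x^{2}+4)F_{n-1}(x)$ when $a:=x$ and $b:=\mathbf{i}$. On the other hand, Theorem~1 identifies the full spectrum of $A$ (for a general $n$, not just odd $n$, since the odd-order hypothesis is only needed later for the power formula): the eigenvalues are $\lambda_{k}=a+2b\cos\!\big(\tfrac{(k-1)\pi}{n-1}\big)$, $k=\overline{1,n}$. Since the characteristic polynomial of $A$ is $\prod_{k=1}^{n}(\lambda-\lambda_{k})$, evaluating at $\lambda=0$ (up to the sign $(-1)^{n}$, which cancels) yields $\det(A)=\prod_{k=1}^{n}\lambda_{k}$.

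Carrying this out: first substitute $a=x$ and $b=\mathbf{i}$ into the eigenvalue formula to get $\lambda_{k}=x+2\mathbf{i}\cos\!\big(\tfrac{(k-1)\pi}{n-1}\big)$. Then
\begin{equation*}
\det(A)=\prod_{k=1}^{n}\lambda_{k}=\prod_{k=1}^{n}\left(x+2\mathbf{i}\cos\left(\frac{(k-1)\pi}{n-1}\right)\right).
\end{equation*}
Comparing this with $\det(A)=(x^{2}+4)F_{n-1}(x)$ from Theorem~5 and dividing both sides by $x^{2}+4$ (which is nonzero for the real parameter $x$, and nonzero as a polynomial identity in $x$ otherwise) gives exactly the claimed factorization.

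There is essentially no hard step here; the corollary is an immediate consequence of combining Theorem~1 and Theorem~5. The only point that deserves a word of care is the justification of $\det(A)=\prod_{k}\lambda_{k}$: one must know that the list $\{\lambda_{k}\}_{k=1}^{n}$ furnished by Theorem~1 is the complete set of eigenvalues counted with algebraic multiplicity. This is already implicit in Theorem~1, whose proof shows the characteristic polynomial of the associated matrix $B$ equals $(t^{2}-4)U_{n-2}(t/2)$, a degree-$n$ polynomial whose $n$ roots are precisely the $t_{k}=2\cos\!\big(\tfrac{(k-1)\pi}{n-1}\big)$; rescaling by $b$ (using $A=bB$ with $c=a/b$) transfers this to $A$. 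Hence the product over $k=\overline{1,n}$ genuinely computes $\det(A)$, and the argument above is complete. If one wishes to avoid invoking the spectrum at all, an alternative is to prove the polynomial identity $(x^{2}+4)F_{n-1}(x)=\prod_{k=1}^{n}\big(x+2\mathbf{i}\cos\tfrac{(k-1)\pi}{n-1}\big)$ directly by induction on $n$ using the Fibonacci recurrence together with product-to-sum manipulations of the cosine factors, but the determinant route is shorter and reuses the machinery already in place.
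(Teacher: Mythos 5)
Your argument is correct and is essentially the paper's own proof: both compute $\det(A)$ as the product of the eigenvalues $\lambda_{k}=x+2\mathbf{i}\cos\left(\frac{(k-1)\pi}{n-1}\right)$ from Theorem 1 and equate this with $\det(A)=(x^{2}+4)F_{n-1}(x)$ from Theorem 5. Your added remarks on completeness of the spectrum and the nonvanishing of $x^{2}+4$ are fine but do not change the route.
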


\begin{proof}
Since the eigenvalues of the matrix $A$ from (4)%
\begin{equation*}
\lambda _{j}=x+2\mathbf{i}\cos \left( \frac{(j-1)\pi }{n-1}\right) ,\ j=%
\overline{1,n},
\end{equation*}%
the determinant of the matrix $A$ can be obtained as%
\begin{equation}
\det (A)=\dprod\limits_{k=1}^{n}\left( x+2\mathbf{i}\cos \left( \frac{%
(k-1)\pi }{n-1}\right) \right) .  \tag{12}  \label{12}
\end{equation}%
By considering (12) and Theorem 5, the complex factorization of generalized
Fibonacci-Pell numbers is obtained.
\end{proof}

\begin{theorem}
Let the matrix $A^{\dagger }$ be as in (3). $a:=1$ and $b:=\mathbf{i}$ where 
$\mathbf{i}=\sqrt{-1}.$ Then%
\begin{equation*}
\det (A^{\dagger })=\left\{ 
\begin{array}{c}
(1+2\mathbf{i})F_{n},\text{ if }a=1\text{ and }b=\mathbf{i} \\ 
(2+2\mathbf{i})P_{n},\text{ if }a=2\text{ and }b=\mathbf{i}%
\end{array}%
\right.
\end{equation*}%
where $F_{n}$ and $P_{n}$ are nth Fibonacci and Pell numbers, respectively.
\end{theorem}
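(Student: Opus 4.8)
The plan is to mimic the proof of Theorem 5, expanding $\det(A^{\dagger})$ by a Laplace expansion along the first two and last two rows, and then to reduce everything to determinants of the simple tridiagonal matrix $\mathrm{tridiag}_n(-\mathbf{i},a,-\mathbf{i})$, whose determinant is already known to be $F_{n+1}(a)$ when we substitute the appropriate value of $a$. The only structural difference between $A$ and $A^{\dagger}$ is in the corner entries: $A$ has $2b$ in positions $(1,2)$ and $(n,n-1)$ and $a$ on the corner diagonal entries, whereas $A^{\dagger}$ has $b$ in those off-diagonal corner slots and $a+b$ in the $(1,1)$ and $(n,n)$ slots. So the first step is to carry out the Laplace expansion carefully and collect the resulting terms as a combination of $D_{n-2}, D_{n-3}, D_{n-4}$ where $D_m=\det(\mathrm{tridiag}_m(-\mathbf{i},a,-\mathbf{i}))=F_{m+1}(a)$.

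Concretely, I would write $A^{\dagger}$ in block form, strip off the first row/column and last row/column, and track the $2\times2$ corner minors. With $b=\mathbf{i}$ one has $b^2=-1$, so the contributions from the corner blocks produce coefficients like $(a+b)^2$, $(a+b)b$, and $b^2$; expanding $(a+\mathbf{i})^2 = a^2 - 1 + 2a\mathbf{i}$ and combining with the cross terms should collapse things. After substituting $D_m=F_{m+1}(a)$ and repeatedly applying the Fibonacci recurrence $F_{m}(a)=aF_{m-1}(a)+F_{m-2}(a)$ (exactly as in the proof of Theorem 5, where $x^2+4$ was factored out), I expect the determinant to factor as $(\text{linear in }a,\mathbf{i})\cdot F_n(a)$ — and then specializing $a=1$ gives $(1+2\mathbf{i})F_n$ since $F_n(1)=F_n$, while $a=2$ gives $(2+2\mathbf{i})P_n$ since $F_n(2)=P_n$. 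Alternatively, one could bypass the Laplace bookkeeping entirely by invoking Theorem 3: the eigenvalues of $A^{\dagger}$ with $b=\mathbf{i}$ are $\lambda_k^{\dagger}=a-2\mathbf{i}\cos(k\pi/n)$, so $\det(A^{\dagger})=\prod_{k=1}^n\bigl(a-2\mathbf{i}\cos(k\pi/n)\bigr)$, and one recognizes this product as a known closed form for $F_n(a)$ up to the boundary factor; but since the paper's style is to use Laplace expansion, I would present that route as the main argument and mention the eigenvalue route as a check.

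The main obstacle will be getting the corner bookkeeping in the Laplace expansion exactly right — in particular making sure the $2\times 2$ minors from the first two rows interact correctly with those from the last two rows (there is a middle term coming from the product of the two corner $2\times2$ blocks times $D_{n-4}$, plus mixed terms of the form (corner block)$\times$(single boundary entry)$\times D_{n-3}$), and tracking all the signs through $b^2=-1$. Once the expression $\det(A^{\dagger}) = \alpha D_{n-2} + \beta D_{n-3} + \gamma D_{n-4}$ is in hand with the correct scalars $\alpha,\beta,\gamma$ (each a polynomial in $a$ and $\mathbf{i}$), the rest is a routine application of the Fibonacci recurrence to pull out the common factor and then the two substitutions $a=1$ and $a=2$.
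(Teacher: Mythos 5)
Your plan is essentially the paper's own proof: the paper likewise performs a Laplace expansion along the first two and last two rows, obtaining $\det(A^{\dagger})=(a+b)^{2}D_{n-2}-2b^{2}(a+b)D_{n-3}+b^{4}D_{n-4}$ with $D_{m}=\det(\mathrm{tridiag}_{m}(-b,a,-b))$, then uses $D_{m}=F_{m+1}$ and the Fibonacci recurrence to collapse the sum to $(1+2\mathbf{i})F_{n}$ for $a=1$, $b=\mathbf{i}$ (and ``similarly'' for the Pell case $a=2$). Your sketch is correct in outline (the exact corner coefficients are the ones just quoted, not $(a+b)b$ and $b^{2}$), and your idea of keeping $a$ general to get $(a+2\mathbf{i})F_{n}(a)$ before specializing is a mild, valid streamlining of the same argument.
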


\begin{proof}
Applying Laplace expansion according to the first two and last two rows the
determinant of the matrix $A^{\dagger }$, we have%
\begin{eqnarray}
\det (A^{\dagger }) &=&(a+b)^{2}\det (tridiag_{n-2}(-b,a,-b))  \TCItag{13}
\label{13} \\
&&-2b^{2}(a+b)\det (tridiag_{n-3}(-b,a,-b))  \notag \\
&&+b^{4}\det (tridiag_{n-4}(-b,a,-b)).  \notag
\end{eqnarray}%
If we get $a=1$ and $b=\mathbf{i}$ in (13), then we obtain%
\begin{eqnarray*}
\det (A^{\dagger }) &=&(1+\mathbf{i})^{2}\det (tridiag_{n-2}(-\mathbf{i},1,-%
\mathbf{i})) \\
&&+2(1+\mathbf{i})\det (tridiag_{n-3}(-\mathbf{i},1,-\mathbf{i})) \\
&&+\det (tridiag_{n-4}(-\mathbf{i},1,-\mathbf{i})).
\end{eqnarray*}%
Since%
\begin{equation*}
\det (tridiag_{n}(\mathbf{i},1,\mathbf{i}))=\det (tridiag_{n}(-\mathbf{i},1,-%
\mathbf{i}))
\end{equation*}%
from (1), we write%
\begin{eqnarray*}
\det (A^{\dagger }) &=&(1+\mathbf{i})^{2}F_{n-1}+2(1+\mathbf{i}%
)F_{n-2}+F_{n-3} \\
&=&(1+2\mathbf{i})F_{n}.
\end{eqnarray*}%
Similarly, we can easily obtain Pell numbers.
\end{proof}

\begin{corollary}
Let the matrix $A^{\dagger }$ be as in (3) with $a:=2$ and $b:=\mathbf{i}$.
Then the complex factorization of Fibonacci and Pell numbers are%
\begin{equation*}
F_{n}=\dprod\limits_{k=1}^{n-1}\left( 1-2\mathbf{i}\cos \left( \frac{k\pi }{n%
}\right) \right)
\end{equation*}%
and%
\begin{equation*}
P_{n}=\dprod\limits_{k=1}^{n-1}\left( 2-2\mathbf{i}\cos \left( \frac{k\pi }{n%
}\right) \right) .
\end{equation*}
\end{corollary}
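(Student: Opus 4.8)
The plan is to combine Theorem 3, which gives the eigenvalues of $A^{\dagger}$, with Theorem 8, which evaluates $\det(A^{\dagger})$ in closed form, and then to read off the product formula after isolating a single factor. So the whole argument is really a short piece of bookkeeping built on results already established.

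First I would write the determinant of $A^{\dagger}$ as the product of its eigenvalues. By Theorem 3, for arbitrary $a$ and $b\neq 0$ these eigenvalues are $\lambda_{k}^{\dagger}=a-2b\cos\left(\frac{k\pi}{n}\right)$ for $k=\overline{1,n}$, so
\begin{equation*}
\det(A^{\dagger})=\prod_{k=1}^{n}\left(a-2b\cos\left(\frac{k\pi}{n}\right)\right).
\end{equation*}
The key observation is that the index $k=n$ is distinguished: since $\cos(\pi)=-1$, that factor equals $a+2b$. Pulling it out of the product gives
\begin{equation*}
\det(A^{\dagger})=(a+2b)\prod_{k=1}^{n-1}\left(a-2b\cos\left(\frac{k\pi}{n}\right)\right).
\end{equation*}

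Next I would specialise $b:=\mathbf{i}$ and split into the two cases of Theorem 8. For $a:=1$ we have $a+2b=1+2\mathbf{i}$, while Theorem 8 gives $\det(A^{\dagger})=(1+2\mathbf{i})F_{n}$; comparing this with the displayed expression and cancelling the nonzero scalar $1+2\mathbf{i}$ yields $F_{n}=\prod_{k=1}^{n-1}\left(1-2\mathbf{i}\cos\left(\frac{k\pi}{n}\right)\right)$. For $a:=2$ we have $a+2b=2+2\mathbf{i}$ and Theorem 8 gives $\det(A^{\dagger})=(2+2\mathbf{i})P_{n}$, and the same cancellation produces $P_{n}=\prod_{k=1}^{n-1}\left(2-2\mathbf{i}\cos\left(\frac{k\pi}{n}\right)\right)$.

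There is essentially no hard step here. The one point that needs care is the identification of the factored-out eigenvalue: one must check that it is precisely $k=n$ (not $k=0$, which lies outside the range $\overline{1,n}$, and not any interior index) for which $\cos\left(\frac{k\pi}{n}\right)=-1$, so that the factor removed from the product is exactly the prefactor $a+2b$ appearing in Theorem 8 — which is what makes the cancellation clean. Once that matching is observed, both factorizations follow immediately by division.
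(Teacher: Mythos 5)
Your argument is correct and is essentially the paper's own proof: both write $\det(A^{\dagger})$ as the product of the eigenvalues $a-2b\cos\left(\frac{k\pi}{n}\right)$, invoke the determinant formula $\det(A^{\dagger})=(1+2\mathbf{i})F_{n}$ (resp. $(2+2\mathbf{i})P_{n}$), and cancel the $k=n$ factor, which equals $a+2b$ since $\cos(\pi)=-1$; you merely make this last cancellation explicit where the paper leaves it implicit. (Only note that the results you cite are Theorem 2 and Theorem 7 in the paper's numbering, not Theorems 3 and 8.)
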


\begin{proof}
Since the eigenvalues of the matrix $A^{\dagger }$ are%
\begin{equation*}
\lambda _{k}=a-2b\cos \cos \left( \frac{k\pi }{n}\right) ,\ k=\overline{1,n}
\end{equation*}%
and the determinant of the matrix $A^{\dagger }$ is multiplication of its
eigenvalues, we have%
\begin{eqnarray*}
F_{n} &=&\frac{1}{1+2\mathbf{i}}\dprod\limits_{k=1}^{n}\left( 1-2\mathbf{i}%
\cos \left( \frac{k\pi }{n}\right) \right) \\
&=&\dprod\limits_{k=1}^{n-1}\left( 1-2\mathbf{i}\cos \left( \frac{k\pi }{n}%
\right) \right)
\end{eqnarray*}%
and%
\begin{eqnarray*}
P_{n} &=&\frac{1}{2+2\mathbf{i}}\dprod\limits_{k=1}^{n}\left( 2-2\mathbf{i}%
\cos \left( \frac{k\pi }{n}\right) \right) \\
&=&\dprod\limits_{k=1}^{n-1}\left( 2-2\mathbf{i}\cos \left( \frac{k\pi }{n}%
\right) \right) .
\end{eqnarray*}%
Thus, the proof is comleted.
\end{proof}

\end{document}